\documentclass[reqno,12pt]{amsart}
\usepackage[margin=1.1in, footskip=1cm]{geometry}
\usepackage{amsmath, amsthm, amssymb, booktabs}
\usepackage{mathrsfs}
\usepackage[usenames,dvipsnames]{color}
\pdfpagewidth=\paperwidth
\pdfpageheight=\paperheight

\newtheorem{theorem}{Theorem}[section]
\newtheorem{lemma}[theorem]{Lemma}

\theoremstyle{definition}

\theoremstyle{remark}

\numberwithin{equation}{section}

\newcommand{\mmod}[1]{\,\,({\rm{mod}}\,\,#1)}

\def\d{{\partial}}

\def\le{\leqslant} \def\ge{\geqslant}

\def\d{{\,{\rm d}}}

\makeatletter
\@namedef{subjclassname@2020}{\textup{2020} Mathematics Subject Classification}
\makeatother

\begin{document}
\title[Sharp mean value estimates]{Sharp mean value estimates in the absence of\\ translation-dilation invariance}
\author[T. D. Wooley]{Trevor D. Wooley}
\address{Dept.~of Mathematics, Purdue University, 150 N. University Street, West 
Lafayette, IN 47907-2067, USA}
\email{twooley@purdue.edu}
\subjclass[2020]{11L15, 11L07, 11P55}
\keywords{Exponential sums, Hardy-Littlewood method}
\date{}

\begin{abstract} We establish sharp mean value estimates for various exponential sums lacking translation-dilation 
invariance properties. For example, suppose that $k\ge 1$ and $s>0$. Then, for each $\varepsilon>0$, one has
\[
\int_{[0,1)^k}\biggl| \sum_{1\le h\le X}\sum_{1\le z\le X}e(hz\alpha_1+\ldots +hz^k\alpha_k)\biggr|^{2s}
\d\boldsymbol \alpha \ll X^\varepsilon\bigl( X^s+X^{2s-k(k+3)/2}\bigr),
\]
confirming the main conjecture for a new family of exponential sums. Our methods are based on the application of 
very recent work of the author employing the nested efficient congruencing method.
\end{abstract}

\maketitle

\section{Introduction} Transformational progress has been achieved in the last decade or so concerning mean value 
estimates for exponential sums, achieving in some cases the sharp estimates conjectured to hold (see \cite{BDG2016, 
GZK2020, Woo2016, Woo2019}, for example). This progress is almost entirely limited to Vinogradov systems of 
equations, and analogues of multidimensional type also possessing translation-dilation invariance properties. Our 
goal in this paper is to expand the list of families amenable to such sharp analysis beyond the class of 
translation-dilation invariant systems. This goal was achieved for a new class of exponential sums in one variable by 
the author in recent work \cite{Woo2026a}. We turn in this memoir to a family of bivariate exponential sums, also not 
of translation-dilation invariant type, for which we achieve similar success.\par

We begin our account of the new systems amenable to our methods with a consideration of the exponential sum
\begin{equation}\label{1.1}
f_k(\boldsymbol \alpha;X)=\sum_{1\le h\le X}\sum_{1\le z\le X}e(h\alpha_1+hz\alpha_2+\ldots +hz^{k-1}\alpha_k).
\end{equation}
Here, as is conventional, we write $e(\theta)$ for ${\rm e}^{2\pi {\rm i}\theta}$. We shall be interested in the mean 
value
\begin{equation}\label{1.2}
B_{s,k}(X)=\int_{[0,1)^k}|f_k(\boldsymbol \alpha;X)|^s\d\boldsymbol \alpha .
\end{equation}

\begin{theorem}\label{theorem1.1}
Suppose that $k\ge 2$ and $s$ is a positive real number. Then, for each $\varepsilon>0$, one has
\begin{equation}\label{1.3}
B_{s,k}(X)\ll X^\varepsilon \bigl( X^s+X^{2s-k(k+1)/2}\bigr) .
\end{equation}
\end{theorem}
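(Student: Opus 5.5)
Our plan is to reduce the theorem to a sharp mean value estimate for a one-variable exponential sum weighted by the variable $h$, and then to invoke the author's recent work \cite{Woo2026a} on the system generated by $hz^j$. The key observation is that, for fixed $h$,
\[
f_k(\boldsymbol\alpha;X)=\sum_{1\le h\le X}e(h\alpha_1)\,g_{k-1}(h\alpha_2,\ldots ,h\alpha_k;X),
\]
where $g_{k-1}(\boldsymbol\beta;X)=\sum_{z\le X}e(\beta_1z+\ldots +\beta_{k-1}z^{k-1})$ is the Vinogradov sum of degree $k-1$. It suffices to handle the critical exponent $s=k(k+1)/2$. For larger $s$ we use the trivial bound $|f_k|\le X^2$. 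For smaller $s$ we apply H\"older's inequality, which interpolates between the critical exponent and $s=0$, since $B_{0,k}=1$.

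For the critical estimate we take $s=2t$ with $t=k(k+1)/4$, allowing non-integral $t$ via H\"older, or reducing first to even moments. By orthogonality, $B_{2t,k}(X)$ counts solutions of the system
\[
\sum_{i=1}^{t}(h_iz_i^{j}-h_{t+i}z_{t+i}^{j})=0\quad (0\le j\le k-1),
\]
with all variables in $[1,X]$. We must show there are $\ll X^{2t+\varepsilon}=X^{s+\varepsilon}$ solutions, i.e.\ essentially only diagonal ones.

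The approach, following \cite{Woo2026a}, is to view the inner sum over $h$ as a weight. Write $a_z(\boldsymbol\alpha)=\sum_h e(h(\alpha_1+\alpha_2z+\ldots+\alpha_kz^{k-1}))$, a Dirichlet-kernel type coefficient of size $\min(X,\|\alpha_1+\ldots\|^{-1})$. The count then becomes a weighted Vinogradov-type mean value of degree $k-1$ in $z$, with an extra equation of degree $0$ weighted by $h$. The nested efficient congruencing method of the author applies to weighted sums of this form. Its key input is that, for fixed $h$-values, the system of degree $k-1$ in $z$ has $k$ equations counting $j=0$, which gives $k(k+1)/2$ as the relevant total degree. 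This is the source of the exponent $2s-k(k+1)/2$. Concretely, we will establish the relevant weighted estimate by running the efficient congruencing iteration on the $z$-variables. Translation invariance fails, but it is only used to localize $z$ modulo a prime power $p^a$. We compensate by shifting $z\mapsto z+\xi$, which transforms the polynomials $hz^j$ into combinations of the polynomials $h(z+\xi)^j$. These span the same space because the family $\{hz^j\}_{0\le j\le k-1}$ is closed under translation in $z$, though not dilation in $h$. This is exactly the structure exploited in \cite{Woo2026a}.

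The main obstacle is this last step. We must verify that the congruencing iteration closes despite the absence of dilation invariance in $h$, and that the weights coming from $h$ do not inflate the count beyond $X^\varepsilon$. The diagonal contribution is $X^{s}$, and one must control solutions with $h_i$ sharing large common factors. We expect to treat these by a divisor-function argument, bounding the number of $(h,z)$ representations of each product $hz^j$ by $X^\varepsilon$.
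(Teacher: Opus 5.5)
Your endgame is fine: H\"older below $s_0=k(k+1)/2$, and the trivial bound $|f_k|\le X^2$ above it. There are, however, two genuine gaps at the critical exponent.

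\textbf{The reduction to even moments.} This fails whenever $s_0$ is odd, i.e.\ for $k\equiv 1,2\pmod 4$, already for $k=2$ where $s_0=3$. H\"older interpolation between even moments cannot rescue this. Since $B_{s_0+1,k}(X)\gg X^{s_0+2}$ (major arc at $\mathbf 0$) and $B_{s_0-1,k}(X)\gg X^{s_0-1}$, the inequality $B_{s_0,k}\le (B_{s_0-1,k}B_{s_0+1,k})^{1/2}$ gives at best $X^{s_0+1/2+\varepsilon}$. The reason is that the exponent $\max\{s,2s-s_0\}$ has its kink exactly at $s_0$.

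The paper gets around parity by passing to the weighted sum $G_k(\boldsymbol\alpha;X;\boldsymbol\varphi)=\sum_z\bigl|\sum_h e(h\Phi_k(z;\boldsymbol\alpha))\bigr|^2$. Its $s$-th moment $A^*_{s,k}$ has a Diophantine interpretation for every integer $s$. Lemma \ref{lemma3.1} writes $XF_k$ as $\int_0^1\!\int_0^1(\Gamma_k(\boldsymbol\alpha;\beta,\gamma;2X)-\Gamma_k(\boldsymbol\alpha;\beta,\gamma;X))K(\beta;X)K(\gamma;X)\,{\rm d}\beta\,{\rm d}\gamma$. Here the weights $2X-|h|$ and $X-|h|$ differ by the constant $X$ on $|h|\le X$, and the kernels $K$ cut out $1\le h,z\le X$. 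H\"older together with $\int_0^1|K(\theta;X)|\,{\rm d}\theta\ll\log(2X)$ then gives Lemma \ref{lemma3.2}: $B_{s,k}(X;\boldsymbol\varphi)\ll X^{-s}(\log 2X)^{2s}(A^*_{s,k}(X;\boldsymbol\varphi)+A^*_{s,k}(2X;\boldsymbol\varphi))$ for all real $s>0$. Even when $s_0$ is even, the paper bounds your unweighted count through the weighted one, via $A^*_{2t,k}(2X)\ge X^{2t}B_{2t,k}(X)$.

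\textbf{The core estimate is never supplied.} You propose to run efficient congruencing directly on $\sum_i(h_iz_i^j-h_{t+i}z_{t+i}^j)=0$, and you yourself flag that the iteration may not close, since the $h_i$ carry no translation or dilation structure. Your divisor-function control of the $h_i$ is the older polynomial-identity technology, which, as the introduction notes, only reaches $s=2t\le k+1$.

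The missing idea is to embed the \emph{weighted} problem into a translation-dilation invariant system over $\mathbb Z[t]/(t^2)$. This is possible because the weighted problem depends on $h$ only through differences $h-h'$.
\begin{itemize}
\item Choose antiderivatives $\psi_j$ with $\psi_j'=\varphi_j$ (here $\psi_j(t)=t^j/j$, rescaled to lie in $\mathbb Z[t]$). Then $\psi_j(u+vT)\equiv\psi_j(u)+v\varphi_j(u)T\mmod{T^2}$.
\item Set $b_k(\boldsymbol\alpha,\boldsymbol\beta;X)=\sum_{z,h}e(\Psi_k(z;\boldsymbol\alpha)+h\Phi_k(z;\boldsymbol\beta))$.
\item Integration over $\boldsymbol\alpha$ forces $z=z'$, except for $z,z'$ bounded in terms of $\boldsymbol\varphi$. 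So $\int|b_k|^2\,{\rm d}\boldsymbol\alpha=G_k(\boldsymbol\beta;X;\boldsymbol\varphi)+O(X^2)$.
\item Hence $A^*_{s,k}(X;\boldsymbol\varphi)\ll\iint|b_k|^{2s}\,{\rm d}\boldsymbol\alpha\,{\rm d}\boldsymbol\beta+X^{2s}$.
\item At $s=s_0$ the double integral is the critical mean value of that system. \cite[Corollary 1.2]{Woo2026} bounds it by $X^{2s_0+\varepsilon}$, the required Wronskian condition being $W_0(t;\boldsymbol\varphi)\ne0$.
\end{itemize}
Feeding $A^*_{s_0,k}\ll X^{2s_0+\varepsilon}$ into Lemma \ref{lemma3.2} gives $B_{s_0,k}(X)\ll X^{s_0+\varepsilon}$. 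Your interpolation argument then finishes the proof.
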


Note that a trivial estimate for the exponential sum $f_k(\boldsymbol \alpha;X)$ yields the bound 
\[
|f_k(\boldsymbol \alpha;X)|\le X^2,
\]
since the sum \eqref{1.1} has at most $X^2$ summands. Thus, the first term in the upper bound \eqref{1.3} reflects 
square-root cancellation on average. Meanwhile, the sum of the degrees of the monomials $hz^{j-1}$ $(1\le j\le k)$ 
occurring in the argument of the exponential sum \eqref{1.1} is equal to $\tfrac{1}{2}k(k+1)$. The set of 
$\boldsymbol \alpha$ with
\[
|\alpha_j| \le \frac{1}{(100k)} X^{-j} \quad (1\le j\le k)
\]
makes a contribution to the integral in (1.2) establishing the lower bound
\[
B_{s,k}(X) \gg X^{2s-k(k+1)/2}.
\]
A consideration of diagonal solutions consequently reveals that the upper bound \eqref{1.3} is sharp up to the factor 
of $X^\varepsilon$.\par

We emphasise that the parameter $s$ in Theorem \ref{theorem1.1} is not constrained to be an even integer. However, 
when $s$ is an even integer, say $s=2t$, then it follows via orthogonality that $B_{s,k}(X)$ counts the number of 
integral solutions of the system of equations
\begin{equation}\label{1.4}
\sum_{i=1}^{t} h_i z_i^{j-1}=\sum_{i=t+1}^{2t}h_i z_i^{j-1}\quad (1\le j\le k).
\end{equation}
While the system of equations \eqref{1.4} is translation-invariant with respect to the shift $z_i\mapsto z_i+a$ 
$(1\le i\le 2t)$, there is no translation invariance property for the variables $h_i$. Thus, the system \eqref{1.4} does not 
belong to the class of translation-dilation invariant systems amenable to methods based on efficient congruencing 
and decoupling. Earlier methods based on the use of polynomial identities and divisor function estimates are 
available. These recover the conclusion of Theorem \ref{theorem1.1} in the range $s=2t\le k+1$ (see 
\cite[Theorem 2.1]{BW2017} in the case $r=1$ and $t=k$, and the earlier work of the author 
\cite[Lemma 2.1]{Woo2015} devoted to the case $k=3$ and $s=4$).\par

The conclusion of Theorem \ref{theorem1.1} is a consequence of a more general conclusion, as we now explain. We 
consider polynomials $\varphi_j\in\mathbb Z[t]$ $(1\le j\le k)$, and the Wronskian
\[
W_0(t;\boldsymbol \varphi)=\det \bigl( \varphi_j^{(i-1)}(t)\bigr)_{1\le i,j\le k},
\]
in which we write $\varphi_j^{(r)}(t)$ for the $r$-th derivative ${\rm d}^r\varphi_j(t)/{\rm d}t^r$. We introduce the 
exponential sum $F_k(\boldsymbol \alpha;X)=F_k(\boldsymbol \alpha;X;\boldsymbol \varphi)$, defined by
\begin{equation}\label{1.5}
F_k(\boldsymbol \alpha;X;\boldsymbol \varphi)=\sum_{1\le h\le X}\sum_{1\le z\le X}
e\bigl( h\varphi_1(z)\alpha_1 + \ldots + h\varphi_k(z)\alpha_k \bigr).
\end{equation}
Finally, when $s>0$, we define the mean value
\begin{equation}\label{1.6}
B_{s,k}(X;\boldsymbol \varphi)=\int_{[0,1)^k} \bigl| F_k(\boldsymbol \alpha;X;\boldsymbol \varphi) \bigr|^s\, 
{\rm d}\boldsymbol \alpha.
\end{equation}

\begin{theorem}\label{theorem1.2}
Suppose that $k\ge 1$ and $\varphi_j\in\mathbb{Z}[t]$ $(1\le j\le k)$ is a system of polynomials with 
$W_0(t;\boldsymbol \varphi)\ne 0$. Let $s$ be a positive number with $s\le k(k+1)/2$. Then for each $\varepsilon>0$, 
one has
\[
B_{s,k}(X;\boldsymbol \varphi) \ll X^{s+\varepsilon}.
\]
\end{theorem}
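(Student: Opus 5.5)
The first step is a reduction to the critical exponent. Put $K=k(k+1)/2$. Since $[0,1)^k$ has measure $1$, H\"older's inequality gives $B_{s,k}(X;\boldsymbol\varphi)\le B_{K,k}(X;\boldsymbol\varphi)^{s/K}$ for $0<s\le K$. So it suffices to prove $B_{K,k}(X;\boldsymbol\varphi)\ll X^{K+\varepsilon}$. Where it helps to count solutions rather than integrate, I would interpolate between the two neighbouring even exponents, using $X^{s}$ bounds at both.

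The central idea is to treat $h$ as a weight attached to a one-variable system in $z$, so that the author's sharp estimates for Wronskian-nondegenerate systems in \cite{Woo2026a} can be imported. Write
\[
g(\boldsymbol\beta)=\sum_{1\le z\le X}e\bigl(\beta_1\varphi_1(z)+\ldots+\beta_k\varphi_k(z)\bigr),
\qquad\text{so that}\qquad
F_k(\boldsymbol\alpha;X;\boldsymbol\varphi)=\sum_{1\le h\le X}g(h\boldsymbol\alpha).
\]
The naive step is H\"older in $h$, followed by the observation that $\boldsymbol\alpha\mapsto h\boldsymbol\alpha$ preserves Haar measure on $\mathbb T^k$. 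This only gives $X^s\int|g|^s\,{\rm d}\boldsymbol\beta$, which is too weak by a factor of about $X^{s/2}$. So the $h$-sum must not be discarded.

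Instead I would sum over $h$ first. This gives $F_k=\sum_z D\bigl(\boldsymbol\varphi(z)\cdot\boldsymbol\alpha\bigr)$ with $D(\beta)=\sum_{h\le X}e(h\beta)$. Then replace $|D|$ by a Fej\'er-type majorant, essentially $X\cdot 1_{\|\beta\|<1/X}$ together with its dyadic tails. With this majorant, $B_{K,k}$ is bounded, up to $X^{\varepsilon}$, by $X^{K}$ times a mean value of the shape
\[
\int_{[0,1)^k}\Bigl(\sum_{z\le X}\omega\bigl(\boldsymbol\varphi(z)\cdot\boldsymbol\alpha\bigr)\Bigr)^{K}{\rm d}\boldsymbol\alpha ,
\]
where $\omega$ is a nonnegative bump of width $1/X$ and height $1$. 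This quantity is a smoothed count of near-solutions of the one-variable system $\sum_i\pm\varphi_j(z_i)\approx 0$, with tolerance $X^{-1}$ times a scale. Because the tolerance is only of height comparable to the $h$-range, and not to $X^{\deg\varphi_j}$, the count should be of order $X^{0+\varepsilon}$ after normalisation. I would aim to verify this by running the nested efficient congruencing iteration of \cite{Woo2026a} on the $z$-variables only. That iteration uses the shift $z\mapsto z+a$, under which $\boldsymbol\varphi(z+a)$ is an invertible linear image of the Taylor coefficients $\boldsymbol\varphi^{(i)}(a)/i!$, because $W_0(t;\boldsymbol\varphi)\ne0$ away from finitely many bad residues. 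The weights $\omega$ should be carried along unchanged, since rescaling $\boldsymbol\alpha$ by this linear map keeps the bump structure up to bounded distortion. The moduli $a$ at which the Wronskian vanishes modulo a prime $p$ are handled as in \cite{Woo2026a}, at a cost of $X^{\varepsilon}$.

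The main obstacle is this last step: establishing the congruencing iteration with the $h$-induced weights in place. In particular, I must show that the shift $z\mapsto z+a$ interacts acceptably with the fixed range $1\le h\le X$, which has no translation invariance. What I would try first is to absorb $h$ entirely into the dual variable. Writing $\boldsymbol\gamma=h\boldsymbol\alpha$, the $h$-sum only dilates, so the missing translation invariance in $h$ is never needed. The difficulty is then pushed into controlling the multiplicity of representations $\boldsymbol\gamma=h\boldsymbol\alpha$. I would handle that multiplicity by decomposing according to $\gcd(h_1,\ldots,h_{K})$ and bounding the resulting divisor-function losses by $X^{\varepsilon}$.
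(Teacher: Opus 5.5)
Your reduction step works and is a legitimate alternative to the paper's exact identity (Lemma 3.1) and Lemma 3.2. Start from $|F_k|\le\sum_z|D(\boldsymbol\varphi(z)\cdot\boldsymbol\alpha)|$, split $\min\{X,\|\beta\|^{-1}\}$ dyadically, and majorise each indicator $1_{\|\beta\|<2^j/X}$ by a Fej\'er kernel of length $Y\asymp X/2^j$. With $K=k(k+1)/2$ as in your proposal, this reduces $B_{K,k}(X;\boldsymbol\varphi)$, at logarithmic cost, to the estimate
\[
\mathcal N(X)=\#\Bigl\{|h_i|<X,\ 1\le z_i\le X:\ \sum_{i=1}^K h_i\varphi_j(z_i)=0\ (1\le j\le k)\Bigr\}\ll X^{K+\varepsilon}.
\]
The wider bumps are covered because this count only decreases when the $h$-range shrinks.

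Be clear about what your bump integral is. By Fourier duality it is essentially $X^{-K}\mathcal N(X)$, not a count of near-solutions of $\sum\pm\varphi_j(z_i)\approx 0$. The $h_i$ have not been eliminated, only re-encoded as the dual variable of $\omega$. Since $A^*_{K,k}(X;\boldsymbol\varphi)\le X^K\mathcal N(X)\le A^*_{K,k}(2X;\boldsymbol\varphi)$, the bound for $\mathcal N(X)$ is equivalent to Lemma 2.1, and it is the whole content of the theorem.

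Here your plan has a genuine gap. Running the congruencing iteration on $z$ alone leaves exactly the obstruction you name, and your proposed fix does not work. For fixed $\mathbf h$ the count is $\int\prod_i g(h_i\boldsymbol\alpha)\,{\rm d}\boldsymbol\alpha$, in which distinct $h_i$ give distinct dilates, so no single substitution $\boldsymbol\gamma=h\boldsymbol\alpha$ is available. Anything routed through moments of your one-variable sum $g$ (for example H\"older over the dilates) yields only $\int|g|^K\,{\rm d}\boldsymbol\beta$ per tuple $\mathbf h$. That is exactly the naive estimate you already rejected as too weak by about $X^{K/2}$. Removing $\gcd(h_1,\ldots,h_K)$ from a system homogeneous in $\mathbf h$ changes nothing.

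The missing idea is to restore translation invariance in $h$ by enlarging the system. Take $\psi_j$ with $\psi_j'=\varphi_j$, rescaled so that $\psi_j\in\mathbb{Z}[t]$. Then $\psi_j(u+vT)\equiv\psi_j(u)+v\varphi_j(u)T\pmod{T^2}$. Hence $b_k(\boldsymbol\alpha,\boldsymbol\beta;X)=\sum_{z,h}e\bigl(\Psi_k(z;\boldsymbol\alpha)+h\Phi_k(z;\boldsymbol\beta)\bigr)$ is a Weyl sum for a translation-dilation invariant system over $\mathbb{Z}[T]/(T^2)$, in which $h$ is the $T$-coefficient. Since $\det(\psi_j^{(i)}(t))_{1\le i,j\le k}=W_0(t;\boldsymbol\varphi)$, the sharp bound of \cite[Corollary 1.2]{Woo2026} applies as a black box. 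Integrating out $\boldsymbol\alpha$ gives $G_k(\boldsymbol\beta;X;\boldsymbol\varphi)+O(X^2)$, because $\boldsymbol\psi$ is injective on large integers. H\"older then gives $A^*_{s,k}(X;\boldsymbol\varphi)\ll X^{2s+\varepsilon}$ for $s\le K$.

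Separately, your aside about interpolating between neighbouring even exponents fails when $K$ is odd. The bound $X^{s+\varepsilon}$ is false at $s=K+1$: for $\varphi_j(z)=z^{j-1}$ one has $B_{K+1,k}\gg X^{K+2}$. Your majorant route needs only that $K$ be an integer, so drop that aside rather than rely on it.
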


The conclusion of Theorem \ref{theorem1.2} shows that when $s\le k(k+1)/2$, the mean value 
$B_{s,k}(X;\boldsymbol \varphi)$ exhibits square-root cancellation relative to the trivial estimate 
\[
|F_k(\boldsymbol \alpha;X;\boldsymbol \varphi)|\le X^{2}.
\]
This new mean value estimate is notably sharp insofar as it applies to exponential sums not of translation-dilation 
invariant type.\par

Next, define
\begin{equation}\label{1.7}
g_k(\boldsymbol \alpha;X)=\sum_{1\le h\le X}\sum_{1\le z\le X}e\bigl( hz\alpha_1 + \ldots + h z^{k}\alpha_k \bigr),
\end{equation}
and, when $s>0$, put
\begin{equation}\label{1.8}
C_{s,k}(X)=\int_{[0,1)^k}\bigl| g_k(\boldsymbol \alpha;X) \bigr|^{s}\, {\rm d}\boldsymbol \alpha.
\end{equation}
Then, as a consequence of Theorem \ref{theorem1.2}, one has
\begin{equation}\label{1.9}
C_{s,k}(X) \ll X^{s+\varepsilon}
\end{equation}
whenever $s\le\tfrac{1}{2}k(k+1)$. By applying ideas aligned with our previous work \cite{Woo2026a} relating to 
relatives of Vinogradov's system, we are able to obtain sharp estimates for all $s>0$.

\begin{theorem}\label{theorem1.3}
Suppose that $k\ge 1$ and $s>0$. Then for each $\varepsilon>0$, one has
\[
C_{s,k}(X)\ll X^\varepsilon\bigl( X^s+X^{2s-k(k+3)/2}\bigr) .
\]
\end{theorem}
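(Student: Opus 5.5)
The plan is to reduce everything to a single critical exponent $s_0=\tfrac12 k(k+3)$, and to reach that exponent by embedding $g_k$ into the sum $F_{k+1}$ of Theorem \ref{theorem1.2}. \emph{Interpolation first.} Suppose we can show $C_{s_0,k}(X)\ll X^{s_0+\varepsilon}$. For $0<s<s_0$, H\"older's inequality on $[0,1)^k$ gives $C_{s,k}(X)\le C_{s_0,k}(X)^{s/s_0}\ll X^{s+\varepsilon}$. For $s>s_0$, the trivial bound $|g_k(\boldsymbol\alpha;X)|\le X^2$ gives $C_{s,k}(X)\le X^{2(s-s_0)}C_{s_0,k}(X)\ll X^{2s-k(k+3)/2+\varepsilon}$. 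So the whole task is the single estimate at $s=s_0$. Theorem \ref{theorem1.2} with $\varphi_j(t)=t^j$ $(1\le j\le k)$, whose Wronskian is a nonzero multiple of $t^{k(k+1)/2}$, gives \eqref{1.9} only up to $s=\tfrac12k(k+1)$. This falls short of $s_0$ by exactly $k$.

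\emph{Embedding in a larger system.} I would apply Theorem \ref{theorem1.2} with $k+1$ in place of $k$ and $\boldsymbol\varphi=(1,t,\ldots,t^k)$. The Wronskian is again nonzero, and the admissible range $s\le \tfrac12(k+1)(k+2)=s_0+1$ contains $s_0$. Writing $G_h(\boldsymbol\alpha)=\sum_{z\le X}e(h(z\alpha_1+\ldots+z^k\alpha_k))$, one has
\[
F_{k+1}(\gamma,\boldsymbol\alpha;X)=\sum_{h\le X}e(h\gamma)G_h(\boldsymbol\alpha),\qquad g_k(\boldsymbol\alpha;X)=F_{k+1}(0,\boldsymbol\alpha;X).
\]
Thus $C_{s_0,k}(X)$ is a slice $\gamma=0$ of an integral over $[0,1)^{k+1}$ that is known to be $\ll X^{s_0+\varepsilon}$. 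A naive recovery of the slice writes $F_{k+1}(0,\boldsymbol\alpha)=\int_0^1F_{k+1}(\gamma,\boldsymbol\alpha)D(-\gamma)\,{\rm d}\gamma$, with $D$ a Dirichlet kernel of length $X$, and then applies H\"older. This costs a factor $\sup|D|\asymp X$, giving only $X^{s_0+1+\varepsilon}$. So some averaging mechanism must supply the missing variable $\gamma$ for free.

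\emph{Shift averaging.} The mechanism I would use is translation invariance in $z$. Replacing $z$ by $z+a$ with $0\le a\le X$ and expanding gives
\[
\sum_j \alpha_j h(z+a)^j=h\beta_0+\sum_{i=1}^k h z^i\beta_i ,
\]
where $\beta_0=\sum_j\alpha_ja^j$ and $(\beta_1,\ldots,\beta_k)$ is a unimodular triangular transform of $\boldsymbol\alpha$ depending on $a$. The range of $z$ becomes $(-a,X-a]$; this can be handled by a standard completion or by dyadic splitting, at a cost of $X^\varepsilon$. Averaging $|g_k|^{s_0}$ over $a$ and changing variables $\boldsymbol\alpha\mapsto\boldsymbol\beta$ then gives
\[
C_{s_0,k}(X)\ll X^{\varepsilon}\int_{[0,1)^k} X^{-1}\sum_{a\le X}\bigl|F_{k+1}(\beta_0(\boldsymbol\beta,a),\boldsymbol\beta)\bigr|^{s_0}\,{\rm d}\boldsymbol\beta .
\]
Here $\beta_0(\boldsymbol\beta,a)$ is a polynomial in $a$ whose leading behaviour is governed by $\beta_k a^k$, or more generally by the top nonnegligible coefficient.

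\emph{Main obstacle.} The hard step is to show that, for typical $\boldsymbol\beta$, the points $\beta_0(\boldsymbol\beta,a)$ $(a\le X)$ are equidistributed at scale $1/X$ in $\gamma$. If so, the normalised sum over $a$ behaves like $\int_0^1\,{\rm d}\gamma$, and the right-hand side is $\ll X^\varepsilon B_{s_0,k+1}(X)\ll X^{s_0+\varepsilon}$. I would first try a large-sieve/Sobolev–Gallagher inequality in $\gamma$ for the trigonometric polynomial $\gamma\mapsto F_{k+1}(\gamma,\boldsymbol\beta)$, which has frequencies $h\le X$. This bounds $\sum_a|F(\beta_0(a))|^{s_0}$ by $\bigl(\max_\gamma \#\{a:\|\beta_0(a)-\gamma\|\le X^{-1}\}\bigr)\cdot X\int_0^1|F|^{s_0}{\rm d}\gamma$, plus a derivative term. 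The counting maximum is bounded by $X^\varepsilon$ outside a set of $\boldsymbol\beta$ that is small in measure. Near the major arcs, where all $\beta_i$ are tiny or very close to rationals of small denominator, this fails. There I would instead split according to the size of $\beta_0$, and use the lower-bound heuristic region $|\beta_j|\le X^{-j-1}$ to check that its contribution is $\ll X^{2s_0-k(k+3)/2}=X^{s_0}$, in the spirit of the arguments of \cite{Woo2026a}.
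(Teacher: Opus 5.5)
Your interpolation step matches the paper's, and viewing $g_k$ as the slice $\gamma=0$ of $F_{k+1}$ with $\boldsymbol\varphi=(1,t,\ldots,t^k)$ is a sound starting point. However, the step you call the main obstacle is a genuine gap, and it carries the entire theorem.

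\emph{The exceptional set is large and leaves no slack.} Write $N_{\max}(\boldsymbol\beta)=\max_\gamma\#\{a\le X:\|\beta_0(\boldsymbol\beta,a)-\gamma\|\le X^{-1}\}$. The set where this exceeds $X^\varepsilon$ contains the box $|\beta_j-b_j/q|\le cX^{-j-1}$ around every rational point $\mathbf b/q$ with $q\le X^{1-\varepsilon}$. Indeed, at $\mathbf b/q$ the values $\beta_0$ lie in $q^{-1}\mathbb Z$, so by pigeonhole $N_{\max}\ge X/q$. Each such box has measure $\asymp X^{-k(k+3)/2}=X^{-s_0}$, while $|F|^{s_0}$ can reach $X^{2s_0}$. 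You would therefore need two things:
\begin{itemize}
\item sharp major-arc approximations to $F$ on all these boxes, summed at exactly the critical exponent;
\item the bound $N_{\max}\ll X^\varepsilon$ on the complement.
\end{itemize}
The second is a finest-scale distribution statement that Weyl-type bounds do not give: even square-root cancellation in $\sum_{a\le X}e(m\beta_0(\boldsymbol\beta,a))$ yields only $N_{\max}\ll X^{1/2}$. Your proposed check on the single box $|\beta_j|\le X^{-j-1}$ handles one of these boxes and gives no bound for $F$ on the rest.

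\emph{A warning sign.} Your argument never uses $s_0+1=\tfrac12(k+1)(k+2)$. The equidistribution part is insensitive to the exponent: for $s_0<s\le s_0+1$ it would bound the generic contribution by $X^\varepsilon B_{s,k+1}(X)\ll X^{s+\varepsilon}$. Yet there $C_{s,k}(X)\gg X^{2s-s_0}$, so all the content sits in the part you have not done.

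\emph{A separate problem with the shifted range.} $s_0$ may be odd (e.g.\ $k=2$, $s_0=5$). Then the $a$-dependent range $-a<z\le X-a$ cannot be repaired ``at cost $X^\varepsilon$''. Completing in $z$ introduces a twist $e(\theta z)$ that is not of the form $h\cdot(\cdots)$, which takes you outside the family of Theorem \ref{theorem1.2}. With no Diophantine interpretation available, you can neither discard the twist nor enlarge the box.

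\emph{The missing idea.} The paper does not try to recover the $\gamma$-integral at all. Instead it pays for the missing $j=0$ equation with one extra copy of the sum, which is affordable precisely because $s_0+1$ is the critical exponent for the $(k+1)$-system.
\begin{enumerate}
\item Lemma \ref{lemma4.2} passes to $C^*_{s_0,k}(Y)$, $Y\in\{X,2X\}$, built from the non-negative sum $h_k=\sum_z|\sum_h\cdots|^2$. For integral $s_0$ of either parity, this counts solutions of $\sum_i(h_i-g_i)z_i^j=\delta_jh$ $(0\le j\le k)$ with $h$ free.
\item Replacing $h$ by $l_1-l_2$ gives $C^*_{s_0,k}(Y)\le Y^{-1}I^*_{s_0,k}(Y)$.
\item The shift $z_i\mapsto z_i+w$ turns the system into $\sum_i(h_i-g_i)u_i^j=(l_1-l_2)w^j$ $(0\le j\le k)$. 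Enlarging the box for $\mathbf u$ is harmless because this is a count.
\item Now $w$ acts as an $(s_0+1)$-st variable carrying weight $l_1-l_2$. Summing over $w$ and applying orthogonality yields $C^*_{s_0,k}(Y)\ll Y^{-2}\int|r_k(\boldsymbol\alpha;2Y)|^{s_0}|r_k(\boldsymbol\alpha;2s_0Y)|\,{\rm d}\boldsymbol\alpha$.
\item H\"older reduces this to $A^*_{s_0+1,k+1}$, which Lemma \ref{lemma2.1} bounds by $Y^{2s_0+2+\varepsilon}$.
\end{enumerate}
This gives $C_{s_0,k}(X)\ll X^{s_0+\varepsilon}$ with no equidistribution or major-arc analysis.
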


Thus, one sees that the bound \eqref{1.9} holds whenever $s\le\tfrac{1}{2}k(k+3)$. Note that since the sum of the 
degrees of the monomials $hz,\ldots,hz^{k}$ is equal to
\[
\sum_{j=2}^{k+1}j= \tfrac{1}{2}k(k+3),
\]
the upper bound for $C_{s,k}(X)$ recorded in Theorem \ref{theorem1.3} achieves that predicted by the standard 
heuristics of the subject. Indeed, the lower bound
\[
C_{s,k}(X)\gg X^{s}+ X^{s-k(k+3)/2}
\]
follows via a consideration of diagonal solutions and the contribution from a single major arc centred at $\mathbf 0$.

\par The proofs of Theorems \ref{theorem1.1}, \ref{theorem1.2} and \ref{theorem1.3} all depend, in one way or 
another, on estimates for mean values of a weighted analogue of the exponential sum 
$F_k(\boldsymbol \alpha;X;\boldsymbol \varphi)$. Here, we adapt the methods introduced in 
\cite[\S\S2 and 3]{Woo2026a} to the present setting. Thus, on defining
\begin{equation}\label{1.10}
G_k(\boldsymbol \alpha;X;\boldsymbol \varphi)=\sum_{1\le z\le X}\biggl| \sum_{1\le h\le X}
e\bigl( h\varphi_1(z)\alpha_1 + \ldots + h\varphi_k(z)\alpha_k \bigr)\biggr|^2,
\end{equation}
we find that the mean value
\begin{equation}\label{1.11}
A^*_{s,k}(X;\boldsymbol \varphi)=\int_{[0,1)^k} \bigl| G_k(\boldsymbol \alpha;X;\boldsymbol \varphi) \bigr|^s
\, {\rm d}\boldsymbol \alpha
\end{equation}
may be related to the mean value $B_{s,k}(X;\boldsymbol \varphi)$ defined via \eqref{1.6}. When $s$ is even, this is 
more or less immediate via orthogonality, but in general this relationship is far more subtle to establish, since
Diophantine interpretations are then absent. It transpires that the mean value $A^*_{s,k}(X;\boldsymbol \varphi)$ is 
readily amenable to estimates available from our earlier work \cite{Woo2026} based on efficient congruencing in the 
ring $\mathbb Z[t]$ modulo $t^2$.\par

We establish the relationship between our work \cite{Woo2026} and the mean value 
$A^*_{s,k}(X;\boldsymbol \varphi)$ in \S2, thus laying the groundwork for what is to come. Then, in \S3, we show 
how the mean value $A^*_{s,k}(X;\boldsymbol \varphi)$ may be related to the corresponding unweighted mean 
value $B_{s,k}(X;\boldsymbol \varphi)$. This work establishes Theorem \ref{theorem1.2}. Little additional work is 
required to extract the conclusion of Theorem \ref{theorem1.1}, and this we also complete in \S3. The proof of 
Theorem \ref{theorem1.3} is discussed in \S4 via the application of a kind of translation-dilation invariance idea 
previously utilised in earlier work of the author (see \cite{Woo2012, Woo2026a}).\par

Throughout, we use $X$ to denote an integer sufficiently large in terms of all ambient parameters, such as 
$\varepsilon$, $k$ and $s$. We make use of standard notational conventions applied in this area of mathematics. 
Thus, throughout, we make liberal use of vector notation in settings not always conventional in nature. For example, 
we may write $1\le\mathbf x\le X$ to denote that every coordinate $x_i$ of $\mathbf x$ satisfies $1\le x_i\le X$. Also, 
given a situation in which the parameter $\varepsilon$ has not already been fixed, in any statement involving the 
letter $\varepsilon$, it is implicitly asserted that the statement holds for each $\varepsilon>0$. In such circumstances, 
implicit constants in Vinogradov's notation $\ll$ and $\gg$ may depend on $\varepsilon$.\par

Work on this paper was conducted while the author was supported by NSF grants DMS-2001549 and DMS-2502625, 
and by Simons Fellowship in Mathematics SFM-00011955. The author is grateful to the Institute for Advanced Study, 
Princeton, for hosting his sabbatical, during which period this paper was completed.

\section{An auxiliary mean value theorem}
We make use of estimates associated with a fully translation-dilation invariant system so as to derive upper bounds 
for the mean value $A^{*}_{s,k}(X;\boldsymbol \varphi)$. We achieve a relationship between these quantities by 
exploiting an idea introduced in \cite[\S2]{Woo2026a}.

\begin{lemma}\label{lemma2.1}
Suppose that $k\ge 1$ and $\varphi_j\in\mathbb{Z}[t]$ $(1\le j\le k)$ is a system of polynomials with 
$W_0(t;\boldsymbol \varphi)\ne 0$. Let $s$ be a positive number with $s\le k(k+1)/2$. Then for each 
$\varepsilon>0$, one has
\[
A^{*}_{s,k}(X;\boldsymbol \varphi) \ll X^{2s+\varepsilon}.
\]
\end{lemma}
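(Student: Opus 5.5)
The plan is to identify $G_k(\boldsymbol\alpha;X;\boldsymbol\varphi)$ with a partial mean value, over auxiliary frequencies, of an exponential sum attached to a translation-dilation invariant system over the ring $\mathbb Z[t]/(t^2)$. The estimate will then come from \cite{Woo2026}. The mechanism is the one from \cite[\S2]{Woo2026a}.

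\emph{Step 1: set up the auxiliary system.} Choose polynomials $\Phi_j\in\mathbb Z[t]$ with $\Phi_j'=c\varphi_j$ for a fixed integer $c\ne 0$ clearing denominators. Evaluating $\Phi_j$ at $x+yt$ modulo $t^2$ gives $\Phi_j(x)+c\,y\varphi_j(x)\,t$. This suggests the sum
\[
H(\boldsymbol\alpha,\boldsymbol\beta)=\sum_{1\le x\le X}\sum_{1\le y\le X}e\Bigl(\sum_{j=1}^k\bigl(\beta_j\Phi_j(x)+\alpha_j y\varphi_j(x)\bigr)\Bigr).
\]
This is the sum whose $2s$-th moments over $[0,1)^{2k}$ are controlled in \cite{Woo2026} by efficient congruencing modulo $t^2$. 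The nonvanishing of $W_0(t;\boldsymbol\varphi)$ should be exactly the nondegeneracy hypothesis required there. I expect that work to deliver
\[
\int_{[0,1)^{2k}}|H(\boldsymbol\alpha,\boldsymbol\beta)|^{2s}\d\boldsymbol\alpha\d\boldsymbol\beta\ll X^{2s+\varepsilon}\quad\text{whenever }s\le\tfrac12k(k+1).
\]
Any factor $c$ introduced by the normalisation can be absorbed by a change of variables in $\boldsymbol\alpha$, at the cost of a constant.

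\emph{Step 2: recover $G_k$ by integrating out $\boldsymbol\beta$.} By orthogonality,
\[
\int_{[0,1)^k}|H(\boldsymbol\alpha,\boldsymbol\beta)|^2\d\boldsymbol\beta=\sum_{\substack{x_1,x_2\\ \boldsymbol\Phi(x_1)=\boldsymbol\Phi(x_2)}}S(x_1)\overline{S(x_2)},\qquad S(x)=\sum_{1\le y\le X}e\bigl(y\,\boldsymbol\varphi(x)\cdot\boldsymbol\alpha\bigr).
\]
The diagonal $x_1=x_2$ gives exactly $G_k(\boldsymbol\alpha;X;\boldsymbol\varphi)$. Since $W_0\ne0$, some $\Phi_j$ is nonconstant, so for each $x_1$ there are $O(1)$ values $x_2$ with $\Phi_j(x_1)=\Phi_j(x_2)$.

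\emph{Step 3: deal with the off-diagonal terms.} These need not be nonnegative, so they cannot simply be discarded. I would avoid the issue by defining the comparison directly as an upper bound for $G_k$. One option is to include the linear form $\beta_0x$ in $H$. Alternatively, one can note that $G_k$ is itself the $\boldsymbol\beta$-integral of $|H'|^2$, where $H'$ restricts $x$ to a residue class or short interval on which $\boldsymbol\Phi$ is injective. Covering $[1,X]$ by $O(1)$ such pieces, on each of which $\boldsymbol\Phi$ is injective, and applying H\"older, costs only a constant. On each piece one then has the identity $G^{(i)}=\int|H^{(i)}|^2\d\boldsymbol\beta$ exactly. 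The mean value bound from \cite{Woo2026} should apply equally to these restricted sums, for example via the usual dissection into intervals.

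\emph{Step 4: H\"older and the range $s<1$.} For $s\ge1$, H\"older's inequality in $\boldsymbol\beta$ gives $\bigl(\int|H|^2\d\boldsymbol\beta\bigr)^s\le\int|H|^{2s}\d\boldsymbol\beta$. Hence
\[
A^*_{s,k}(X;\boldsymbol\varphi)\ll\int\!\!\int|H|^{2s}\d\boldsymbol\alpha\d\boldsymbol\beta\ll X^{2s+\varepsilon}.
\]
For $0<s<1$, H\"older on $[0,1)^k$ gives $A^*_{s,k}\le(A^*_{1,k})^s\ll X^{2s+\varepsilon}$.

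The main obstacle is Step 1, together with the off-diagonal issue in Step 3. Step 1 requires checking that the $2k$-dimensional system $\bigl(\Phi_j(x),\,y\varphi_j(x)\bigr)$ meets the precise Wronskian-type hypothesis of the theorem in \cite{Woo2026}. That hypothesis is phrased for systems over $\mathbb Z[t]/(t^2)$, and it must also be checked that the critical exponent there is $k(k+1)/2$ in the present normalisation.
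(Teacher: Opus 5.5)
Your proposal is correct and is essentially the paper's argument. Your $H$ is the paper's $b_k(\boldsymbol\alpha,\boldsymbol\beta;X)$, built from integral antiderivatives $\psi_j$ of a fixed multiple of the $\varphi_j$. The Wronskian hypothesis you flag is immediate, since $\det(\psi_j^{(i)}(t))_{1\le i,j\le k}$ is a nonzero constant times $W_0(t;\boldsymbol\varphi)$. So \cite[Corollary 1.2]{Woo2026} at $s_0=k(k+1)/2$, orthogonality in the antiderivative coefficients, and H\"older finish the job as you describe.

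The differences are minor. The paper handles the off-diagonal coincidences $\boldsymbol\psi(z_1)=\boldsymbol\psi(z_2)$ more simply: each $\psi_j$ is eventually strictly monotone, so these coincidences involve only $O(1)$ bounded values of $z$ and contribute $O(X^2)$. Your splitting into pieces, or the extra $\beta_0x$, also works, but the transfer of the mean value bound should be justified by positivity of the solution count at the integer exponent $s_0$, not left vague. Your separate treatment of $0<s<1$ via $A^*_{s,k}\le(A^*_{1,k})^s$ covers a range the paper passes over: its step $(\int|b_k|^2\,{\rm d}\boldsymbol\alpha)^s\le\int|b_k|^{2s}\,{\rm d}\boldsymbol\alpha$ tacitly requires $s\ge 1$.
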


\begin{proof} We relate the mean value $A^{*}_{s,k}(X;\boldsymbol \varphi)$ to one that envelopes the underlying 
Diophantine system, in a suitable sense. First, for $1\le j\le k$, we define the polynomial $\psi_j(t)$ by means of the
relation $\psi_j'(t)=\varphi_j(t)$, subject to the condition $\psi_j(0)=0$. Thus, the polynomial $\psi_j$ is an 
antiderivative of $\varphi_j$. By change of variable in the mean value $A^{*}_{s,k}(X;\boldsymbol \varphi)$, there is no 
loss of generality if we replace $\varphi_j(t)$ by $\lambda\varphi_j(t)$, for any non-zero integer $\lambda$. By making 
a choice for $\lambda$ having the property that $\lambda\psi_j(t)\in\mathbb{Z}[t]$ $(1\le j\le k)$, we see that there is 
likewise no loss of generality in supposing, {\it ab initio}, that $\varphi_j(t)\in\mathbb{Z}[t]$ $(1\le j\le k)$ is a system 
of polynomials having the property that $\psi_j(t)\in\mathbb{Z}[t]$ $(1\le j\le k)$. The purpose of this manoeuvre is to 
ensure that
\[
\psi_j(u+vT) \equiv \psi_j(u)+v\varphi_j(u)\,T \mmod{T^2},
\]
thereby opening the possibility of making use of \cite[Theorem 1.1 and Corollary 1.2]{Woo2026}.\par

Next, when $\boldsymbol \alpha,\boldsymbol \beta\in\mathbb{R}^k$, we define the polynomials
\[
\Psi_k(z;\boldsymbol \alpha) = \alpha_1\psi_1(z) + \ldots + \alpha_k\psi_k(z),
\]
and
\begin{equation}\label{2.1}
\Phi_k(z;\boldsymbol \beta) = \beta_1\varphi_1(z) + \ldots + \beta_k\varphi_k(z).
\end{equation}
We then define the exponential sum
\[
b_k(\boldsymbol \alpha,\boldsymbol \beta;X)=\sum_{1\le z\le X}\sum_{1\le h\le X}
e\bigl( \Psi_k(z;\boldsymbol \alpha)+h\Phi_k(z;\boldsymbol \beta) \bigr),
\]
and the associated mean value
\begin{equation}\label{2.2}
B_{s,k}(X)=\int_{[0,1)^k}\int_{[0,1)^k}\bigl| b_k(\boldsymbol \alpha,\boldsymbol \beta;X) \bigr|^{2s}\,{\rm d}
\boldsymbol \alpha\,{\rm d}\boldsymbol \beta.
\end{equation}

\par Given an integral $k$-tuple $\mathbf u$, denote by $\mathcal{B}(\mathbf u)$ the set of integers $z$ with 
$1\le z\le X$ having the property that
\[
\psi_j(z) = u_j \quad (1\le j\le k).
\]
Note that $\text{card}(\mathcal{B}(\mathbf u))$ is bounded above by the maximum of the degrees of $\psi_j$ for 
$1\le j\le k$. Equipped with this notation, it follows via orthogonality that
\begin{equation}\label{2.3}
\int_{[0,1)^k} \bigl| b_k(\boldsymbol \alpha,\boldsymbol \beta;X) \bigr|^2\,{\rm d}\boldsymbol \alpha
=\sum_{\mathbf u\in\mathbb{Z}^k}\Biggl| \sum_{\substack{1\le z\le X \\ z\in\mathcal{B}(\mathbf u)}}
\sum_{1\le h\le X}e\bigl( h\Phi_k(z;\boldsymbol \beta) \bigr)\Biggr|^2.
\end{equation}

\par We may suppose that $X$ is sufficiently large in terms of the coefficients of $\boldsymbol \varphi$, and hence 
also $\boldsymbol \psi$, and consequently the real zeros of $\varphi_j(t)$ may be considered bounded in terms of
$\boldsymbol \varphi$. In particular, we have $|\varphi_j(\xi)|\gg 1$ whenever $\xi$ is sufficiently large. Hence, by the 
mean value theorem, whenever $z_1$ and $z_2$ are sufficiently large with $z_2>z_1$, there exists $\xi\in(z_1,z_2)$ 
with
\[
\biggl| \frac{\psi_j(z_1)-\psi_j(z_2)}{z_1-z_2}\biggr|=|\psi_j'(\xi)|=|\varphi_j(\xi)|\gg 1.
\]
Consequently, we find that $\text{card}(\mathcal{B}(\mathbf u))=1$ whenever $u_j$ is sufficiently large for any index 
$j$ with $1\le j\le k$. By making use of this observation within \eqref{2.3}, we find that for some positive number
$c=c(\boldsymbol \varphi)$, one has
\begin{align*}
\int_{[0,1)^k} \bigl| b_k(\boldsymbol \alpha,\boldsymbol \beta;X) \bigr|^{2}\,{\rm d}\boldsymbol \alpha
&= \sum_{\substack{\mathbf u\in\mathbb{Z}^{k} \\ \text{card}(\mathcal{B}(\mathbf u))=1}}\,
\sum_{\substack{1\le z\le X\\ z\in\mathcal B(\mathbf u)}}\,\biggl| \sum_{1\le h\le X}
e\bigl( h\Phi_k(z;\boldsymbol \beta)\bigr)\biggr|^2\\
&\qquad \qquad + O\Biggl( \sum_{\substack{\mathbf u\in\mathbb{Z}^{k} \\ |\mathbf u|\le c(\boldsymbol \varphi)}}
\Biggl| \sum_{\substack{1\le z\le X \\ z\in\mathcal{B}(\mathbf u)}}\, \sum_{1\le h\le X} 
e\bigl( h\Phi_k(z;\boldsymbol \beta) \bigr)\Biggr|^2\Biggr) \\
&= \sum_{1\le z\le X} \biggl| \sum_{1\le h\le X} e\bigl( h\Phi_k(z;\boldsymbol \beta) \bigr) \biggr|^2+ O(X^2).
\end{align*}
Hence, we conclude from \eqref{1.10} that
\begin{equation}\label{2.4}
\int_{[0,1)^k}\bigl| b_k(\boldsymbol \alpha,\boldsymbol \beta;X)\bigr|^2\, {\rm d}\boldsymbol \alpha
=G_k(\boldsymbol \beta;X;\boldsymbol \varphi) + O(X^2).
\end{equation}

By applying H\"older's inequality to the relation \eqref{2.4} and recalling \eqref{2.2}, we deduce from \eqref{1.11} that
\begin{align}
A^*_{s,k}(X;\boldsymbol \varphi)&\ll \int_{[0,1)^k}\biggl( \int_{[0,1)^k}
\bigl| b_k(\boldsymbol \alpha,\boldsymbol \beta;X) \bigr|^2\,{\rm d}\boldsymbol \alpha\biggr)^{\!\!s}\,{\rm d}
\boldsymbol \beta + O(X^{2s}) \notag \\
&\ll \int_{[0,1)^k}\int_{[0,1)^k}\bigl| b_k(\boldsymbol \alpha,\boldsymbol \beta;X) \bigr|^{2s}\,{\rm d}
\boldsymbol \alpha\,{\rm d}\boldsymbol \beta + O(X^{2s})\notag \\
&=B_{s,k}(X)+O(X^{2s}).\label{2.5}
\end{align}
The mean value $B_{s,k}(X)$ is the subject of \cite[Corollary 1.2]{Woo2026} in the situation that $s$ is an integer with 
$s\le k(k+1)/2$. Indeed, we find via orthogonality that $B_{s,k}(X)$ counts the number of integral solutions of the 
Diophantine system
\[
\left. \begin{aligned}
\sum_{i=1}^s\bigl( \psi_j(u_i) - \psi_j(u_{s+i}) \bigr) &= 0, \\
\sum_{i=1}^s\bigl( v_i\psi_j'(u_i) - v_{s+i}\psi_j'(u_{s+i}) \bigr) &= 0
\end{aligned} \right\} \quad (1\le j\le k),
\]
with $1\le \mathbf u, \mathbf v\le X$. Moreover, since
\[
\det\bigl( \psi_j^{(i)}(t) \bigr)_{1\le i,j\le k}=\det\bigl( \varphi_j^{(i-1)}(t)\bigr)_{1\le i,j\le k}=W_0(t;\boldsymbol \varphi),
\]
one sees that the Wronskian condition of \cite[Corollary 1.2]{Woo2026} is satisfied whenever one has 
$W_0(t;\boldsymbol \varphi)\ne 0$, as we suppose to be the case. On writing $s_0=k(k+1)/2$, we thus conclude from 
\cite[Corollary 1.2]{Woo2026} that $B_{s_0,k}(X)\ll X^{2s_0+\varepsilon}$. An application of H\"older's inequality 
therefore shows from \eqref{2.2} that whenever $0<s\le s_0$, one has
\[
B_{s,k}(X)\ll \bigl( B_{s_0,k}(X)\bigr)^{s/s_0}\ll X^{2s+\varepsilon}.
\]
Thus, we deduce from \eqref{2.5} that $A^{*}_{s,k}(X;\boldsymbol \varphi) \ll X^{2s+\varepsilon}$, and the proof of 
the lemma is complete.
\end{proof}

\section{Sharp moment estimates} In this section we seek to establish Theorems \ref{theorem1.1} and 
\ref{theorem1.2}. It is worth noting that the conclusions of these theorems are immediate corollaries of Lemma 
\ref{lemma2.1} when $s$ is even and $s\le\tfrac{1}{2}k(k+1)$, as we now explain. Indeed, in the latter situation, we 
may write $s=2t$ for an integer $t$, and then it follows via orthogonality that $A^*_{2t,k}(2X;\boldsymbol \varphi)$ 
counts the number of integral solutions of the Diophantine system
\[
\sum_{i=1}^t (g_i-g_i')\varphi_j(z_i)=\sum_{i=1}^t (g_{t+i}-g_{t+i}')\varphi_j(z_{t+i})\quad (1\le j\le k),
\]
with $1\le \mathbf g, \mathbf g', \mathbf z\le 2X$. For a given integer $h$ with $|h|\le 2X$, the number of solutions of the 
equation $g-g'=h$, with $1\le g,g'\le 2X$, is precisely $2X-|h|$. Meanwhile, from \eqref{1.5} and \eqref{1.6}, again by 
orthogonality, we see that $B_{2t,k}(X;\boldsymbol \varphi)$ counts the number of integral solutions of the Diophantine 
system
\[
\sum_{i=1}^{t} h_i\varphi_j(z_i)= \sum_{i=1}^{t} h_{t+i}\varphi_j(z_{t+i})\quad (1\le j\le k),
\]
with $1\le \mathbf h, \mathbf z\le X$. Since $2X-|h|\ge X$ when $1\le h\le X$, it follows that
\[
A^*_{2t,k}(2X;\boldsymbol \varphi) \ge X^{2t}B_{2t,k}(X;\boldsymbol \varphi).
\]
Consequently, we find from Lemma \ref{lemma2.1} that when $2t\le k(k+1)/2$, one has
\[
B_{2t,k}(X;\boldsymbol \varphi) \le X^{-2t}A^*_{2t,k}(2X;\boldsymbol \varphi) \ll X^{2t+\varepsilon}.
\]
This establishes Theorem \ref{theorem1.2} for $s$ even and $s\le\tfrac{1}{2}k(k+1)$, with Theorem \ref{theorem1.1} 
following as a corollary under the same hypotheses on $s$.\par

The situation with $s$ odd is not amenable to a Diophantine interpretation of the mean value 
$B_{s,k}(X;\boldsymbol \varphi)$, and so we must work rather harder to obtain the same conclusion. We begin by 
establishing a technical lemma. The announcement of this lemma requires that we introduce some additional notation. 
First, we recall \eqref{2.1} and define
\[
\Gamma_k(\boldsymbol \alpha;\beta,\gamma;X)=\sum_{1\le z\le X}\biggl| \sum_{1\le h\le X}
e\bigl( h\Phi_k(z;\boldsymbol \alpha)+\beta h\bigr)\biggr|^2 e(\gamma z).
\]
The reader may care to note, by reference to \eqref{1.10}, that one has
\[
\Gamma_k(\boldsymbol \alpha;0,0;X)=G_k(\boldsymbol \alpha;X;\boldsymbol \varphi).
\]
We also require the auxiliary exponential sum
\begin{equation}\label{3.1}
K(\theta;X)=\sum_{1\le y\le X} e(-\theta y).
\end{equation}

\begin{lemma}\label{lemma3.1}
One has
\[
XF_k(\boldsymbol \alpha;X;\boldsymbol \varphi)=\int_0^1\!\!\int_0^1 \bigl( \Gamma_k(\boldsymbol \alpha;\beta,\gamma;2X)- 
\Gamma_k(\boldsymbol \alpha;\beta,\gamma;X) \bigr) K(\beta;X)K(\gamma;X)\,{\rm d}\beta\,{\rm d}\gamma.
\]
\end{lemma}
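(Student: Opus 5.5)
We prove the identity by expanding the squared modulus in $\Gamma_k$ and then using orthogonality in $\beta$ and $\gamma$ to pick out the terms we want.

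\textbf{Expansion.} For $Y\in\{X,2X\}$ we write
\[
\Gamma_k(\boldsymbol \alpha;\beta,\gamma;Y)=\sum_{1\le z\le Y}\sum_{1\le h_1,h_2\le Y}e\bigl( (h_1-h_2)\Phi_k(z;\boldsymbol \alpha)+\beta(h_1-h_2)+\gamma z\bigr).
\]
Multiplying by $K(\beta;X)K(\gamma;X)=\sum_{1\le y,w\le X}e(-\beta y-\gamma w)$ and integrating over $[0,1)^2$, orthogonality forces
\[
h_1-h_2=y,\qquad z=w .
\]
Here $1\le y\le X$ and $1\le w\le X$. We conclude that
\[
\int_0^1\!\!\int_0^1 \Gamma_k(\boldsymbol \alpha;\beta,\gamma;Y)K(\beta;X)K(\gamma;X)\,{\rm d}\beta\,{\rm d}\gamma
=\sum_{1\le z\le X}\sum_{1\le y\le X}N_Y(y)\,e\bigl(y\Phi_k(z;\boldsymbol \alpha)\bigr),
\]
where $N_Y(y)$ denotes the number of pairs $(h_1,h_2)$ with $1\le h_1,h_2\le Y$ and $h_1-h_2=y$. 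The condition $z\le X$ coming from $w\le X$ automatically keeps $z\le Y$ when $Y=2X$.

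\textbf{Counting.} For $0\le y<Y$ one has $N_Y(y)=Y-y$. Taking the difference of the two choices of $Y$, for $1\le y\le X$ we get
\[
N_{2X}(y)-N_X(y)=(2X-y)-\max(X-y,0).
\]
When $1\le y\le X$ this equals $(2X-y)-(X-y)=X$. At $y=X$ the count $N_X(X)=0$ agrees with $X-y=0$, so the formula is valid on the whole range. The weights $y$ cancel exactly, and this cancellation is the only point that needs care. It is the reason for subtracting the $\Gamma_k(\cdot;X)$ term rather than using $\Gamma_k(\cdot;2X)$ alone.

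\textbf{Conclusion.} Substituting, the right-hand side of the lemma equals
\[
X\sum_{1\le z\le X}\sum_{1\le y\le X}e\bigl(y\Phi_k(z;\boldsymbol \alpha)\bigr).
\]
By \eqref{1.5} and \eqref{2.1}, this is $XF_k(\boldsymbol \alpha;X;\boldsymbol \varphi)$, with $y$ playing the role of $h$.

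All sums are finite, so interchanging summation and integration is immediate. Nothing beyond orthogonality and the count $N_Y(y)=Y-y$ is needed, and the only obstacle is checking that the boundary value $y=X$ is covered correctly.
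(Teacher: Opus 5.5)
Your proof is correct and follows essentially the same route as the paper: you expand $\Gamma_k$ via the count $Y-|h|$ of representations $h=h_1-h_2$, and then apply orthogonality in $\beta$ and $\gamma$. The only difference is the order of operations. The paper subtracts first, splitting $\Gamma_k(\boldsymbol\alpha;\beta,\gamma;2X)-\Gamma_k(\boldsymbol\alpha;\beta,\gamma;X)$ into a main term with constant weight $X$ plus two pieces (those with $z>X$ or $|h|>X$) that integrate to zero, whereas you integrate first and then use $N_{2X}(y)-N_X(y)=X$.
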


\begin{proof}
Observe that the number of solutions of the equation $h_1-h_2=h$ with $1\le h_i\le X$ $(i=1,2)$ is $X-|h|$, when 
$|h|\le X$, and otherwise is $0$. Thus, we find that
\[
\Gamma_k(\boldsymbol \alpha;\beta,\gamma;X)=\sum_{1\le z\le X}\sum_{|h|\le X}(X-|h|)
e\bigl( h\Phi_k(z;\boldsymbol \alpha)+\beta h+\gamma z\bigr).
\]
It follows that
\begin{equation}\label{3.2}
\Gamma_k(\boldsymbol \alpha;\beta,\gamma;2X)-\Gamma_k(\boldsymbol \alpha;\beta,\gamma;X)=
\sum_{i=1}^3 T_i(\boldsymbol \alpha;\beta,\gamma;X),
\end{equation}
where
\begin{align*}
T_1(\boldsymbol \alpha;\beta,\gamma;X)&=X\sum_{1\le z\le X}\sum_{|h|\le X}
e\bigl( h\Phi_k(z;\boldsymbol \alpha)+\beta h+\gamma z \bigr),\\
T_2(\boldsymbol \alpha;\beta,\gamma;X)&=\sum_{X<z\le 2X}\sum_{|h|\le 2X}(2X-|h|)
e\bigl( h\Phi_k(z;\boldsymbol \alpha)+ \beta h + \gamma z \bigr),\\
T_3(\boldsymbol \alpha;\beta,\gamma;X)&=\sum_{1\le z\le X}\sum_{X<|h|\le 2X}(2X-|h|)
e\bigl( h\Phi_k(z;\boldsymbol \alpha)+\beta h+\gamma z\bigr).
\end{align*}

\par On recalling \eqref{1.5}, we find that as a consequence of orthogonality, one has the relation
\begin{align}
\int_0^1\!\!\int_0^1 &T_1(\boldsymbol \alpha;\beta,\gamma;X)K(\beta;X)K(\gamma;X)\,{\rm d}\beta\,{\rm d}\gamma 
\notag \\
&= X\int_0^1\!\!\int_0^1 \sum_{1\le z\le X}\sum_{|h|\le X}
e\bigl( h\Phi_k(z;\boldsymbol \alpha)+\beta h+\gamma z\bigr) \sum_{1\le y\le X}e(-\beta y)
\sum_{1\le w\le X}e(-\gamma w)\,{\rm d}\beta\,{\rm d}\gamma \notag \\
&=XF_k(\boldsymbol \alpha;X;\boldsymbol \varphi ).\label{3.3}
\end{align}
Meanwhile, since the summands in $T_2(\boldsymbol \alpha;\beta,\gamma;X)$ satisfy $z>X$, we see in a similar 
manner that
\begin{align*}
\int_0^1 T_2&(\boldsymbol \alpha;\beta,\gamma;X)K(\gamma;X)\,{\rm d}\gamma \\
&=\sum_{X<z\le 2X}\sum_{|h|\le 2X}(2X-|h|)e\bigl( h\Phi_k(z;\boldsymbol \alpha)+\beta h\bigr)
\int_0^1 e(\gamma z)\sum_{1\le w\le X}e(-\gamma w)\,{\rm d}\gamma = 0.
\end{align*}
Consequently, we have
\begin{equation}\label{3.4}
\int_0^1\!\!\int_0^1 T_2(\boldsymbol \alpha;\beta,\gamma;X)K(\beta;X)K(\gamma;X)\,{\rm d}\beta\,{\rm d}\gamma=0.
\end{equation}
A similar argument shows that
\begin{equation}\label{3.5}
\int_0^1\!\!\int_0^1 T_3(\boldsymbol \alpha;\beta,\gamma;X)K(\beta;X)K(\gamma;X)\,{\rm d}\beta\,{\rm d}\gamma=0.
\end{equation}
The desired conclusion now follows on combining \eqref{3.2}-\eqref{3.5}.
\end{proof}

The conclusion of Lemma \ref{lemma3.1} permits the estimation of mean values of the exponential sum 
$F_k(\boldsymbol \alpha;X;\boldsymbol \varphi )$ in terms of $A^*_{s,k}(X;\boldsymbol \varphi)$.

\begin{lemma}\label{lemma3.2}
One has
\[
B_{s,k}(X;\boldsymbol \varphi)\ll X^{-s} (\log(2X))^{2s}\bigl( A^{*}_{s,k}(X;\boldsymbol \varphi) 
+A^*_{s,k}(2X;\boldsymbol \varphi) \bigr).
\]
\end{lemma}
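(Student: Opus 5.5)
The plan is to start from the identity of Lemma \ref{lemma3.1}, raise it to the $s$-th power in absolute value, and integrate over $\boldsymbol\alpha\in[0,1)^k$. The only obstruction to dominating $\Gamma_k$ by $G_k$ is the twisting phases $e(\beta h)$ and $e(\gamma z)$, and these are removed by H\"older's inequality with respect to the measure $|K(\beta;X)K(\gamma;X)|\,{\rm d}\beta\,{\rm d}\gamma$, whose total mass is $O((\log 2X)^2)$. A final change of variables will absorb the shift by $\beta$.

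\emph{Step 1.} From Lemma \ref{lemma3.1},
\[
X|F_k(\boldsymbol\alpha;X;\boldsymbol\varphi)|\le \int_0^1\!\!\int_0^1\bigl(|\Gamma_k(\boldsymbol\alpha;\beta,\gamma;2X)|+|\Gamma_k(\boldsymbol\alpha;\beta,\gamma;X)|\bigr)|K(\beta;X)K(\gamma;X)|\,{\rm d}\beta\,{\rm d}\gamma .
\]
Since $|K(\theta;X)|\ll\min(X,\|\theta\|^{-1})$, we have $\int_0^1|K(\theta;X)|\,{\rm d}\theta\ll\log(2X)$. For $s\ge1$, H\"older's inequality with respect to the measure $|K(\beta)K(\gamma)|\,{\rm d}\beta\,{\rm d}\gamma$ gives
\[
X^s|F_k|^s\ll(\log 2X)^{2(s-1)}\int_0^1\!\!\int_0^1\bigl(|\Gamma_k(\cdot;2X)|^s+|\Gamma_k(\cdot;X)|^s\bigr)|K(\beta)K(\gamma)|\,{\rm d}\beta\,{\rm d}\gamma .
\]
For $0<s<1$, the problem is that the needed inequality runs the wrong way. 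I would instead bound the weighted average by its supremum, that is, $(\int w\,|\Gamma|)^s\le(\int w)^s\sup_{\beta,\gamma}|\Gamma|^s$. This is legitimate because Step 2 gives a pointwise bound for $\Gamma_k$ uniform in $\beta,\gamma$. Either way the log power is at most $(\log 2X)^{2s}$ up to constants, or $(\log 2X)^{2}$ in the case $s\ge1$, which may be absorbed if needed.

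\emph{Step 2.} This is the key pointwise step. Bound
\[
|\Gamma_k(\boldsymbol\alpha;\beta,\gamma;Y)|\le\sum_{1\le z\le Y}\Bigl|\sum_{1\le h\le Y}e\bigl(h(\Phi_k(z;\boldsymbol\alpha)+\beta)\bigr)\Bigr|^2 ,
\]
so $\gamma$ disappears. To remove $\beta$, I aim to show that
\[
\int_{[0,1)^k}\Bigl(\sum_{z}\bigl|\textstyle\sum_h e(h(\Phi_k(z;\boldsymbol\alpha)+\beta))\bigr|^2\Bigr)^s{\rm d}\boldsymbol\alpha
\]
is independent of $\beta$, or at least bounded by the $\beta=0$ case. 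The obvious route is a shift $\boldsymbol\alpha\mapsto\boldsymbol\alpha-\beta\mathbf c$ making $\Phi_k(z;\mathbf c)\equiv1$. That requires the constant polynomial $1$ to lie in the span of the $\varphi_j$, which fails in general. This is the main obstacle.

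Two fallbacks are available. The first is to repeat the argument of Lemma \ref{lemma2.1} with an extra variable: replace $b_k$ by the sum with the phase $h\Phi_k(z;\boldsymbol\beta)+\beta_0h$ added. This corresponds to adjoining $\varphi_0=1$, or equivalently $\psi_0=t$, and the Wronskian of $(1,\varphi_1,\dots,\varphi_k)$ stays nonzero. However, this changes $k$, so I prefer the second fallback. Since $|\sum_h e(h\theta)|^2=\sum_{|m|<Y}(Y-|m|)e(m\theta)$, the dependence on $\beta$ is through a Fej\'er-type kernel. One could try to show that the mean value in $\boldsymbol\alpha$ does not increase under such translation for $s$ even, and then interpolate.

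Realistically, I expect the intended argument is that Step 1 is applied after integrating over $\boldsymbol\alpha$, with the $\beta$-twist handled by extracting it via the triangle inequality before taking the modulus square. Whichever route works, the target is
\[
\int|\Gamma_k(\boldsymbol\alpha;\beta,\gamma;Y)|^s\,{\rm d}\boldsymbol\alpha\ll A^*_{s,k}(Y;\boldsymbol\varphi)
\]
uniformly in $\beta,\gamma$.

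\emph{Step 3.} Integrate over $\boldsymbol\alpha$, interchange the integrals, apply the uniform bound from Step 2 for $Y\in\{X,2X\}$, and use $\int|K|\ll\log 2X$ once more. Dividing by $X^s$ then yields the claim.
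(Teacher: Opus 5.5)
Your architecture matches the paper's. You start from Lemma~\ref{lemma3.1}, apply H\"older's inequality against the measure $|K(\beta;X)K(\gamma;X)|\,{\rm d}\beta\,{\rm d}\gamma$, and discard $e(\gamma z)$ by the triangle inequality. This reduces everything to bounding
\[
T(\beta;Y)=\int_{[0,1)^k}\biggl(\sum_{1\le z\le Y}\Bigl|\sum_{1\le h\le Y}e\bigl(h\Phi_k(z;\boldsymbol\alpha)+\beta h\bigr)\Bigr|^2\biggr)^{\!s}\,{\rm d}\boldsymbol\alpha
\]
uniformly in $\beta$. But that is the one substantive step, and you leave it open. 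The missing idea is positivity, not a change of variables in $\boldsymbol\alpha$. For $s$ a natural number, expand the $s$-th power and integrate over $\boldsymbol\alpha$ by orthogonality. Then $T(\beta;Y)$ is the number of solutions of $\sum_{i=1}^s(h_i-h_i')\varphi_j(z_i)=0$ $(1\le j\le k)$ with $1\le\mathbf z,\mathbf h,\mathbf h'\le Y$, each counted with the unimodular weight $e\bigl(\beta\sum_{i=1}^s(h_i-h_i')\bigr)$. The twist alters only the weights, never the solution set, so the triangle inequality gives $|T(\beta;Y)|\le T(0;Y)=A^*_{s,k}(Y;\boldsymbol\varphi)$ at once. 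Your worry that $1$ need not lie in the span of the $\varphi_j$ is therefore beside the point. Your Fej\'er-kernel remark was the right ingredient, since the coefficients $Y-|m|$ are non-negative. It works for every natural number $s$, not only even $s$, because $G_k$ is already a sum of squared moduli. The other routes you float do not deliver the bound. Adjoining $\varphi_0=1$ estimates a different mean value, and only an unweighted average over $\beta$ rather than the $|K(\beta;X)|$-weighted one. Applying the triangle inequality before squaring destroys the cancellation in $h$.

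Your treatment of $0<s<1$ is also unjustified. Step 2, even once completed, bounds $\int|\Gamma_k|^s\,{\rm d}\boldsymbol\alpha$ uniformly in $\beta,\gamma$; it is not a pointwise bound uniform in $\beta$, since $\sum_z|\sum_h e(h(\Phi_k(z;\boldsymbol\alpha)+\beta))|^2$ still depends on $\beta$. Placing $\sup_{\beta,\gamma}$ inside the $\boldsymbol\alpha$-integral therefore leaves a quantity you cannot control. Likewise, ``interpolating'' from integer exponents would only bound $T(\beta;Y)$ by a H\"older combination of values of $A^*$ at neighbouring integers, not by $A^*_{s,k}(Y;\boldsymbol\varphi)$ itself. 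The paper's argument (H\"older with exponent $s-1$, then orthogonality) is valid for natural numbers $s$, which is the hypothesis made explicit in Lemma~\ref{lemma4.2}. That suffices, because Theorem~\ref{theorem1.2} for real $s\le k(k+1)/2$ follows from the case $s=k(k+1)/2$ by H\"older's inequality in $s$.
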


\begin{proof}
We find from Lemma \ref{lemma3.1} and \eqref{1.6} that $B_{s,k}(X;\boldsymbol \varphi)$ is equal to
\[
X^{-s} \int_{[0,1)^k}\biggl| \int_0^1\!\!\int_0^1 \bigl( \Gamma_k(\boldsymbol \alpha;\beta,\gamma;2X) - 
\Gamma_k(\boldsymbol \alpha;\beta,\gamma;X) \bigr) K(\beta;X)K(\gamma;X)\,{\rm d}\beta\,{\rm d}\gamma \biggr|^s
\,{\rm d}\boldsymbol \alpha.
\]
By H\"older's inequality, one finds that
\begin{multline*}
\biggl| \int_0^1\!\!\int_0^1 \bigl( \Gamma_k(\boldsymbol \alpha;\beta,\gamma;2X)-
\Gamma_k(\boldsymbol \alpha;\beta,\gamma;X) \bigr) K(\beta;X)\,K(\gamma;X)\,{\rm d}\beta\,{\rm d}\gamma 
\biggr|^s\\
\ll \biggl( \int_0^1\!\!\int_0^1 |K(\beta;X)\,K(\gamma;X)|\,{\rm d}\beta\,{\rm d}\gamma \biggr)^{s-1}
\bigl( I(\boldsymbol \alpha;2X) + I(\boldsymbol \alpha;X) \bigr),
\end{multline*}
where
\begin{equation}\label{3.6}
I(\boldsymbol \alpha;Y)=\int_0^1\!\!\int_0^1 \bigl| \Gamma_k(\boldsymbol \alpha;\beta,\gamma;Y) \bigr|^s
|K(\beta;X)\,K(\gamma;X)|\,{\rm d}\beta\,{\rm d}\gamma.
\end{equation}
Since
\begin{equation}\label{3.7}
\int_0^1 |K(\theta;X)|\,{\rm d}\theta \ll \int_0^1 \min\{ X, \lVert\theta\rVert^{-1} \}\,{\rm d}\theta \ll \log(2X),
\end{equation}
we discern thus far that
\begin{equation}\label{3.8}
B_{s,k}(X;\boldsymbol \varphi)\ll X^{-s} (\log(2X))^{2s-2}\int_{[0,1)^k}\bigl( I(\boldsymbol \alpha;2X)+ 
I(\boldsymbol \alpha;X)\bigr)\,{\rm d}\boldsymbol \alpha.
\end{equation}

\par The definition of $\Gamma_k$ combines with \eqref{3.6} and \eqref{3.7} to show that
\begin{align*}
I(\boldsymbol \alpha;Y)&\le \int_0^1\!\!\int_0^1\biggl( \sum_{1\le z\le Y}\Bigl| \sum_{1\le h\le Y}e\bigl( h\Phi_k(z;\boldsymbol \alpha)
+\beta h\bigr) \Bigr|^2 \biggr)^{\! s}|K(\beta;X)\,K(\gamma;X)|\,{\rm d}\beta\,{\rm d}\gamma \\
&\ll (\log 2X) \int_0^1\biggl( \sum_{1\le z\le Y}\Bigl| \sum_{1\le h\le Y}e\bigl( h\Phi_k(z;\boldsymbol \alpha)
+\beta h \bigr)\Bigr|^2 \biggr)^{\! s}|K(\beta;X)|\,{\rm d}\beta.
\end{align*}
Hence, we infer that
\begin{equation}\label{3.9}
\int_{[0,1)^k}I(\boldsymbol \alpha;Y)\,{\rm d}\boldsymbol \alpha \ll (\log (2X))\int_0^1 T(\beta;Y)|K(\beta;X)|\,{\rm d}
\beta,
\end{equation}
where we write
\[
T(\beta;Y)=\int_{[0,1)^k}\biggl( \sum_{1\le z\le Y}\Bigl| \sum_{1\le h\le Y}e\bigl( h\Phi_k(z;\boldsymbol \alpha)
+\beta h\bigr) \Bigr|^2\biggr)^{\! s}\,{\rm d}\boldsymbol \alpha.
\]

\par By orthogonality, the integral $T(\beta;Y)$ counts the integral solutions of the system of equations
\[
\sum_{i=1}^s (h_i-h_i')\varphi_j(z_i)=0\quad (1\le j\le k),
\]
with $1\le \mathbf z, \mathbf h, \mathbf h'\le Y$, and with each solution $\mathbf z, \mathbf h, \mathbf h'$ counted 
with weight
\[
e\biggl( \beta \sum_{i=1}^s (h_i-h_i') \biggr).
\]
Thus, on recalling \eqref{1.11}, we discern that
\[
|T(\beta;Y)|\le T(0;Y)=A^*_{s,k}(Y;\boldsymbol \varphi).
\]
On substituting this upper bound into \eqref{3.9}, we conclude via \eqref{3.7} and \eqref{3.8} that
\begin{align*}
B_{s,k}(X;\boldsymbol \varphi)&\ll X^{-s}(\log (2X))^{2s-1}\bigl( A^*_{s,k}(X;\boldsymbol \varphi)+
A^*_{s,k}(2X;\boldsymbol \varphi)\bigr) \int_0^1\min\{2X,\|\theta\|^{-1}\}\,{\rm d}\theta \\
&\ll X^{-s}(\log (2X))^{2s}\bigl( A^*_{s,k}(X;\boldsymbol \varphi)+A^*_{s,k}(2X;\boldsymbol \varphi)\bigr) .
\end{align*}
This completes the proof of the lemma.
\end{proof}

We are now equipped to complete the proof of Theorem \ref{theorem1.2}

\begin{proof}[The proof of Theorem \ref{theorem1.2}.]
On substituting the upper bound provided by Lemma \ref{lemma2.1} into the conclusion of Lemma \ref{lemma3.2}, 
we find that
\[
B_{s,k}(X;\boldsymbol \varphi)\ll X^{-s}(\log (2X))^{2s}\cdot (2X)^{2s+\varepsilon}\ll X^{s+2\varepsilon}.
\]
This completes the proof of Theorem \ref{theorem1.2}.
\end{proof}

Finally, the conclusion of Theorem \ref{theorem1.1} follows as an immediate corollary of Theorem \ref{theorem1.2} on 
taking $\varphi_j(z)=z^{j-1}$ $(1\le j\le k)$, since then $W_0(t;\boldsymbol \varphi)\ne 0$.

\section{Sharp moment estimates with a missing slice}
We move in this section towards the goal of establishing Theorem \ref{theorem1.3}. This conclusion is a mean value 
estimate related to the upper bound provided in Theorem \ref{theorem1.1} for the mean value of exponential sums 
$f_k(\boldsymbol \alpha;X)$, save that the term $h\alpha_1$ in the argument of the exponential sum is removed in 
\eqref{1.1}. In a related setting (see \cite[\S3]{Woo2026a}), such a situation was handled by a shift towards a 
translation-dilation invariant system of equations. We adapt that strategy now to the present setting.\par

Before announcing our key lemma, we begin by recalling the exponential sums $g_k(\boldsymbol \alpha;X)$ and 
$f_k(\boldsymbol \alpha;X)$ defined in \eqref{1.7} and \eqref{1.1}, respectively. We again introduce an auxiliary sum as in 
the preamble to Lemma \ref{lemma3.1}. We define
\[
\Lambda_k(\boldsymbol \alpha;\beta,\gamma;X)=\sum_{1\le z\le X}\biggl| \sum_{1\le h\le X}e\bigl( hz\alpha_1+\ldots 
+hz^k\alpha_k + \beta h\bigr)\biggr|^2 e(\gamma z),
\]
and we recall the definition of $K(\theta;X)$ from \eqref{3.1}.

\begin{lemma}\label{4.1}
One has
\[
Xg_k(\boldsymbol \alpha;X)=\int_0^1\!\!\int_0^1 \bigl( \Lambda_k(\boldsymbol \alpha;\beta,\gamma;2X) - 
\Lambda_k(\boldsymbol \alpha;\beta,\gamma;X) \bigr) K(\beta;X)\,K(\gamma;X)\,{\rm d}\beta\,{\rm d}\gamma.
\]
\end{lemma}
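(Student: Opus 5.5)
The plan is to copy the proof of Lemma \ref{lemma3.1}, with the polynomials $\varphi_j(z)=z^{j-1}$ replaced by $z^j$ $(1\le j\le k)$. Write
\[
\Theta(z;\boldsymbol\alpha)=z\alpha_1+\ldots+z^k\alpha_k .
\]
First I will expand the square in $\Lambda_k$. For $|h|\le X$, the number of pairs $(h_1,h_2)$ with $1\le h_1,h_2\le X$ and $h_1-h_2=h$ is $X-|h|$. This gives
\[
\Lambda_k(\boldsymbol\alpha;\beta,\gamma;X)=\sum_{1\le z\le X}\sum_{|h|\le X}(X-|h|)\,e\bigl(h\Theta(z;\boldsymbol\alpha)+\beta h+\gamma z\bigr).
\]

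Next I subtract the analogous expression with $2X$ in place of $X$, exactly as in \eqref{3.2}. The difference splits into three pieces. The first, $T_1$, has $1\le z\le X$, $|h|\le X$, and constant weight $(2X-|h|)-(X-|h|)=X$. The second, $T_2$, has $X<z\le 2X$, $|h|\le 2X$, and weight $2X-|h|$. The third, $T_3$, has $1\le z\le X$, $X<|h|\le 2X$, and weight $2X-|h|$.

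Now I integrate each piece against $K(\beta;X)K(\gamma;X)$. By orthogonality, this integration picks out exactly the terms with $h=y$ and $z=w$ for some $1\le y,w\le X$. In $T_1$ this leaves
\[
X\sum_{1\le z\le X}\sum_{1\le h\le X}e\bigl(h\Theta(z;\boldsymbol\alpha)\bigr)=Xg_k(\boldsymbol\alpha;X).
\]
In $T_2$ every term has $z>X$, so the $\gamma$-integral vanishes. In $T_3$ every term has $|h|>X$, so the $\beta$-integral vanishes. Each sum is finite, so interchanging summation and integration is immediate.

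The argument is entirely routine. The only point needing care is the bookkeeping of the weights and ranges in the three-way decomposition. The key observation is that the region $1\le z\le X$, $1\le h\le X$ gets weight exactly $X$. The other regions, as well as the negative values of $h$ inside $T_1$, are annihilated by the restriction to $1\le y,w\le X$.
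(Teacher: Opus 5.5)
Your proposal is correct and is the paper's argument: the paper deduces the statement as the case $\varphi_j(z)=z^j$ of Lemma \ref{lemma3.1}, whose proof is the same three-term decomposition and orthogonality computation you give. Lemma \ref{lemma3.1} was proved for arbitrary $\varphi_j\in\mathbb{Z}[t]$, not just $z^{j-1}$, so you could simply specialize it rather than re-run the argument.
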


\begin{proof}
This is the special case of Lemma \ref{lemma3.1} in which $\varphi_j(z)=z^j$ $(1\le j\le k)$.
\end{proof}

We are now equipped to bound the mean value $C_{s,k}(X)$, defined in \eqref{1.8}, in terms of mean values involving 
auxiliary exponential sums. We introduce the exponential sum
\[
h_k(\boldsymbol \alpha;X)=\sum_{1\le z\le X}\biggl| \sum_{1\le h\le X}e\bigl( hz\alpha_1+\ldots +h z^k\alpha_k \bigr)
\biggr|^2,
\]
and then define a mean value related to $C_{s,k}(X)$, namely
\[
C^*_{s,k}(X)=\int_{[0,1)^k}\bigl| h_k(\boldsymbol \alpha;X)\bigr|^s\, {\rm d}\boldsymbol \alpha.
\]

\begin{lemma}\label{lemma4.2}
When $s$ is a natural number, one has
\[
C_{s,k}(X)\ll X^{-s}(\log (2X))^{2s}\bigl( C^*_{s,k}(X) + C^*_{s,k}(2X) \bigr).
\]
\end{lemma}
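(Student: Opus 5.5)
The plan is to repeat the proof of Lemma \ref{lemma3.2}, with Lemma \ref{4.1} in place of Lemma \ref{lemma3.1} and $\varphi_j(z)=z^j$ throughout. With this choice $F_k(\boldsymbol\alpha;X;\boldsymbol\varphi)=g_k(\boldsymbol\alpha;X)$ and $G_k(\boldsymbol\alpha;X;\boldsymbol\varphi)=h_k(\boldsymbol\alpha;X)$. Hence $C_{s,k}(X)=B_{s,k}(X;\boldsymbol\varphi)$ and $C^*_{s,k}(X)=A^*_{s,k}(X;\boldsymbol\varphi)$, and the estimate would follow directly from Lemma \ref{lemma3.2} if that lemma were available for this system. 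Note that Lemma \ref{lemma3.2} as proved nowhere used the Wronskian hypothesis: that hypothesis enters only through Lemma \ref{lemma2.1}. So the argument should carry over verbatim. Even so, I will recheck the steps with this system in mind rather than simply cite the lemma.

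First, Lemma \ref{4.1} gives
\[
C_{s,k}(X)=X^{-s}\int_{[0,1)^k}\biggl|\int_0^1\!\!\int_0^1(\Lambda_k(\boldsymbol\alpha;\beta,\gamma;2X)-\Lambda_k(\boldsymbol\alpha;\beta,\gamma;X))K(\beta;X)K(\gamma;X)\d\beta\d\gamma\biggr|^s\d\boldsymbol\alpha.
\]
Next I apply H\"older's inequality to the inner integral, using the measure $|K(\beta;X)K(\gamma;X)|\d\beta\d\gamma$, whose total mass is $\ll(\log 2X)^2$ by \eqref{3.7}. This bounds the integrand by $(\log 2X)^{2s-2}$ times $I(\boldsymbol\alpha;2X)+I(\boldsymbol\alpha;X)$, where
\[
I(\boldsymbol\alpha;Y)=\int_0^1\!\!\int_0^1|\Lambda_k(\boldsymbol\alpha;\beta,\gamma;Y)|^s|K(\beta;X)K(\gamma;X)|\d\beta\d\gamma.
\]
Then I use the trivial inequality
\[
|\Lambda_k(\boldsymbol\alpha;\beta,\gamma;Y)|\le\sum_{z\le Y}\biggl|\sum_{h\le Y}e(hz\alpha_1+\ldots+hz^k\alpha_k+\beta h)\biggr|^2,
\]
which removes $\gamma$. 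Integrating over $\gamma$ then costs a further factor $\log 2X$.

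The remaining step is to bound
\[
T(\beta;Y)=\int_{[0,1)^k}\biggl(\sum_{z\le Y}\biggl|\sum_{h\le Y}e(hz\alpha_1+\ldots+hz^k\alpha_k+\beta h)\biggr|^2\biggr)^s\d\boldsymbol\alpha
\]
uniformly in $\beta$ by $T(0;Y)=C^*_{s,k}(Y)$. This is the only place the hypothesis that $s$ is a natural number is used. For integral $s$ the $s$-th power expands as a sum over $z_i,h_i,h_i'$, and orthogonality shows $T(\beta;Y)$ counts the solutions of
\[
\sum_i(h_i-h_i')z_i^j=0\quad(1\le j\le k)
\]
with weight $e(\beta\sum_i(h_i-h_i'))$ of modulus one. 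Hence $|T(\beta;Y)|\le T(0;Y)$. This positivity comparison is the main point to watch, and it goes through exactly as before.

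Finally, integrating $|K(\beta;X)|$ over $\beta$ contributes one more $\log 2X$ factor. Counting the logarithms gives $(2s-2)+1+1=2s$, so
\[
C_{s,k}(X)\ll X^{-s}(\log 2X)^{2s}\bigl(C^*_{s,k}(X)+C^*_{s,k}(2X)\bigr),
\]
as required.
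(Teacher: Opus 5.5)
Your proposal is correct and follows the paper's proof, which simply specialises Lemma \ref{lemma3.2} to $\varphi_j(z)=z^j$ $(1\le j\le k)$. Your step-by-step recheck, including the observations that the Wronskian hypothesis plays no role and that integrality of $s$ is needed only for the bound $|T(\beta;Y)|\le T(0;Y)$, is just the proof of Lemma \ref{lemma3.2} written out for this choice of polynomials.
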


\begin{proof}
This is the special case of Lemma \ref{lemma3.2} corresponding to the choice of polynomials $\varphi_j(z)=z^j$ 
$(1\le j\le k)$.
\end{proof}

We can now begin the main discussion.

\begin{proof}[The proof of Theorem \ref{theorem1.3}.]
We follow an argument motivated by that of Lemma \ref{lemma3.1}, fixing $s=\tfrac{1}{2}(k^{2}+3k)$, and focusing on the 
estimation of $C^*_{s,k}(Y)$. Here, we may take $Y=X$ or $Y=2X$. Define $\delta_j$ to be $1$ when $j=0$, and $0$ 
otherwise. Then, by interpreting the mean value $C^*_{s,k}(Y)$ via orthogonality, one finds that it counts the integral 
solutions of the system of equations
\begin{equation}\label{4.1}
\sum_{i=1}^{s} (h_i - g_i)z_i^j = \delta_jh\quad (0\le j\le k),
\end{equation}
with $1\le \mathbf h, \mathbf g, \mathbf z\le Y$ and $|h|\le sY$. Here, we adopt the usual convention that $z_i^{0}=1$. 
Denote by $I^*_{s,k}(Y)$ the number of solutions of the system of equations
\begin{equation}\label{4.2}
\sum_{i=1}^s(h_i-g_i)z_i^j=\delta_j(l_1-l_2)\quad (0\le j\le k),
\end{equation}
with $1\le \mathbf h, \mathbf g, \mathbf z\le Y$ and $|\mathbf l|\le 2sY$. Then, a comparison of \eqref{4.1} and 
\eqref{4.2} reveals that
\begin{equation}\label{4.3}
C^*_{s,k}(Y)\le Y^{-1}I^*_{s,k}(Y).
\end{equation}

\par We next consider a shift of each variable $z_i$ in \eqref{4.2} by any integer $w$ with $1\le w\le Y$. By the 
binomial theorem, for each shift $w$, one finds that $\mathbf g,\mathbf h,\mathbf z$ is a solution of \eqref{4.2} if and 
only if it is also a solution of the system
\[
\sum_{i=1}^s(h_i-g_i)(z_i+w)^j=(l_1-l_2)w^j\quad (0\le j\le k).
\]
We deduce that for each fixed integer $w$ with $1\le w\le Y$, the mean value $I^*_{s,k}(Y)$ is bounded above by the 
number of solutions of the system
\begin{equation}\label{4.4}
\sum_{i=1}^s(h_i-g_i)u_i^j=(l_1-l_2)w^j\quad (0\le j\le k),
\end{equation}
with $1\le \mathbf g,\mathbf h,\mathbf u\le 2Y$ and $|\mathbf l|\le 2sY$.\par

Define the exponential sums
\begin{equation}\label{4.5}
r_k(\boldsymbol \alpha;Y)=\sum_{1\le z\le Y}\biggl| \sum_{1\le h\le Y}e\bigl( h\alpha_0+h z\alpha_1+\ldots 
+h z^k\alpha_k \bigr)\biggr|^2,
\end{equation}
and
\[
r^*_{k,s}(\boldsymbol \alpha;w;Y)=\sum_{1\le h\le 2sY}e\bigl( h\alpha_0+hw\alpha_1+\ldots +hw^k\alpha_k \bigr).
\]
Then, by averaging over the above choices for $w$ and applying orthogonality, we deduce by means of \eqref{4.4} 
that
\begin{align*}
I^*_{s,k}(Y)&\le Y^{-1} \sum_{1\le w\le Y}\int_{[0,1)^{k+1}}\bigl| r_k(\boldsymbol \alpha;2Y) \bigr|^s
\bigl| r^*_{k,s}(\boldsymbol \alpha;w;Y)\bigr|^2\,{\rm d}\boldsymbol \alpha \\
&\le Y^{-1}\sum_{1\le w\le 2sY}\int_{[0,1)^{k+1}}\bigl| r_k(\boldsymbol \alpha;2Y) \bigr|^s 
\bigl| r^*_{k,s}(\boldsymbol \alpha;w;Y) \bigr|^2\,{\rm d}\boldsymbol \alpha \\
&=Y^{-1}\int_{[0,1)^{k+1}}\bigl| r_k(\boldsymbol \alpha;2Y) \bigr|^s\bigl| r_k(\boldsymbol \alpha;2sY) \bigr|
\,{\rm d}\boldsymbol \alpha.
\end{align*}
Thus, on applying \eqref{4.3} and H\"older's inequality, we deduce that
\begin{equation}\label{4.6}
C^*_{s,k}(Y)\ll Y^{-2}J_2^{s/(s+1)}J_{2s}^{1/(s+1)},
\end{equation}
where
\[
J_t=\int_{[0,1)^{k+1}}\bigl| r_k(\boldsymbol \alpha;tY) \bigr|^{s+1}\,{\rm d}\boldsymbol \alpha.
\]

\par Recall the definition \eqref{1.11} and put $\varphi_j(z)=z^{j-1}$ $(1\le j\le k+1)$. Then, a comparison of 
\eqref{4.5} with \eqref{1.10} shows that
\[
J_t=A^*_{s+1,k+1}(tY).
\]
Note next that since $s=\tfrac{1}{2}k(k+3)$, one has
\[
s+1=\tfrac{1}{2}(k^2+3k+2)=\tfrac{1}{2}(k+1)(k+2).
\]
Hence, by Lemma \ref{lemma2.1}, one finds from \eqref{4.6} that
\[
C^*_{s,k}(Y)\ll Y^{-2}\cdot Y^{2s+2+\varepsilon}=Y^{2s+\varepsilon}.
\]
On substituting this estimate into Lemma \ref{lemma4.2}, we conclude that
\[
C_{s,k}(X)\ll X^{-s}\cdot(\log (2X))^{2s}\cdot X^{2s+\varepsilon}\ll X^{s+2\varepsilon}.
\]
This estimate has been shown to hold for $s=\tfrac{1}{2}k(k+3)$. The proof of Theorem \ref{theorem1.3} follows by 
applying H\"older's inequality to discern that for $0\le u\le \tfrac{1}{2}k(k+3)$, one has
\[
C_{u,k}(X)\le (C_{s,k}(X))^{u/s}\ll X^{u+\varepsilon}, 
\]
and applying a trivial estimate for $|g_k(\boldsymbol \alpha;X)|$ to confirm that for $u>\tfrac{1}{2}k(k+3)$, one has
\[
C_{u,k}(X)\ll (X^2)^{u-s}C_{s,k}(X)\ll X^{2u-\tfrac{1}{2}k(k+3)+\varepsilon}.
\]
This completes the proof of the theorem.
\end{proof}

\bibliographystyle{amsbracket}
\providecommand{\bysame}{\leavevmode\hbox to3em{\hrulefill}\thinspace}

\end{document}